\theoremstyle{plain}
\numberwithin{equation}{section}
\newtheorem{thm}{Theorem}[section]
\newtheorem{proposition}[thm]{Proposition}
\newtheorem{construction}[thm]{Construction}
\newtheorem{problem}[thm]{Problem}
\begin{document}

\title{The Triangle of Smallest Area Which Circumscribes a Semicircle}
\author{Jun Li}
\address{
School of Science\\
Jiangxi University of Science and Technology\\ Ganzhou\\
341000\\
China.
}
\email{junli323@163.com}

\begin{abstract}
An interesting problem that determine a triangle of smallest area which circumscribes a semicircle is solved. Then a generalized golden right triangles sequence $T_n$ is obtained, and an interesting construction of the maximum generalized golden right triangle $T_2$ is also shown.
\end{abstract}

\date{2016.6}
\maketitle

\section{The Triangle of Smallest Area Which Circumscribes a Semicircle}

In \cite{cte1}, DeTemple showed that the isosceles triangle of smallest perimeter which circumscribes a semicircle is made up of two congruent right triangles which have sides proportional to $(1, \sqrt{\phi}, \phi)$, and the right triangle is known as the Kepler triangle\cite[p. 149]{cte2}\cite{cte3}. 
Here, we'll consider another interesting problem that determine a triangle of smallest area which circumscribes a semicircle.

\begin{figure}[ht]
\centering
\includegraphics[width=1\textwidth]{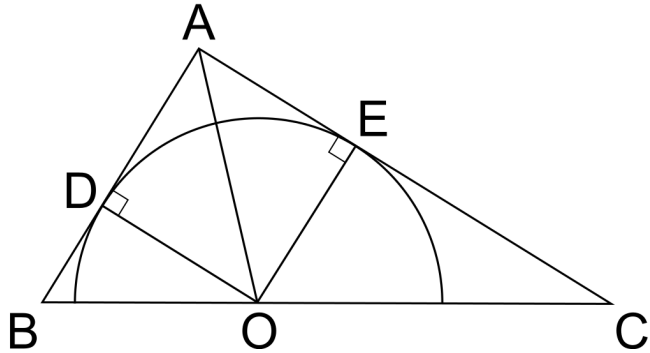}
\caption{The triangle of smallest area which circumscribes a semicircle}
\label{fig:fg1a}
\end{figure}
In Figure \ref{fig:fg1a}, Let a triangle $\triangle{ABC}$ circumscribe a semicircle of radius $R=1$, with the diameter of the semicircle contained in the base $BC$, let $O$ denote the center of the semicircle, $OD$ and $OE$ are both the radii, then let $AB=x$ and $\angle{A}=\theta$, now, our problem is:
\begin{problem}\label{pm}
If the radius $R$ and the two sides $AB$ and $AC$ of $\triangle{ABC}$ can not be three edges of a triangle, determine $\triangle{ABC}$ of smallest area.
\end{problem}
\begin{proposition}\label{prop1}
If $R$, $AB$ and $AC$ can not be three edges of a triangle, then the of smallest area $\triangle{ABC}$ is a right triangle similar to the ($1$, $\phi$,$\sqrt {1+\phi^2}$) triangle which forms half of a golden rectangle\cite{cte8}\cite[p. 274]{cte5}\cite[p. 115]{cte9}.
\end{proposition}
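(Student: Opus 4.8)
The plan is to first convert the area into a function of the two lateral sides, and then to recognize that the stated non-triangle condition is exactly what forces the optimum onto a boundary. Since $O$ lies on $BC$, the cevian $AO$ splits $\triangle{ABC}$ into $\triangle{ABO}$ and $\triangle{ACO}$; because the semicircle is tangent to both $AB$ and $AC$, the distance from $O$ to each of these sides equals $R=1$. Hence
\[
[\triangle ABC]=[\triangle ABO]+[\triangle ACO]=\tfrac{1}{2}AB\cdot R+\tfrac{1}{2}AC\cdot R=\tfrac{1}{2}(AB+AC),
\]
so minimizing the area is the same as minimizing $AB+AC$. To parametrize, I would use that $AO$ bisects $\angle A$, giving $OA=1/\sin(\theta/2)$ and equal tangent lengths $AD=AE=\cot(\theta/2)$; writing $\beta=\angle B$ and $\gamma=\angle C$, one gets $BD=\cot\beta$ and $CE=\cot\gamma$, hence $AB=\cot(\theta/2)+\cot\beta$ and $AC=\cot(\theta/2)+\cot\gamma$, with $\beta+\gamma+\theta=\pi$.

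Next I would locate the relevant feasible region. Minimizing $AB+AC=2\cot(\theta/2)+\cot\beta+\cot\gamma$ with no side condition has its only interior critical point at $\beta=\gamma=\pi/4$ (so $\theta=\pi/2$), giving $AB=AC=2$ and area $2$; but then $R,AB,AC=1,2,2$ do form a genuine triangle, so this configuration is excluded by the hypothesis of Problem~\ref{pm}. Consequently the constrained minimum cannot be interior to the feasible set and must lie on its boundary, where $1,AB,AC$ form a degenerate triangle, i.e.\ where the triangle inequality holds with equality. Since $AB+AC>1$ always, the binding case is $|AB-AC|=R=1$; taking $AB-AC=1$ without loss of generality translates into $\cot\beta-\cot\gamma=1$.

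It then remains to minimize $AB+AC$ along the one-parameter boundary family defined by $\cot\beta-\cot\gamma=1$ together with $\beta+\gamma=\pi-\theta$. I would impose stationarity with a Lagrange multiplier (or eliminate one angle and differentiate), which yields $\csc^{2}\beta$ and $\csc^{2}\gamma$ both expressible through $\sec^{2}\tfrac{\beta+\gamma}{2}$; combined with $\cot\beta-\cot\gamma=1$ the system forces $\theta=\pi/2$ together with $\cot\beta=\phi$ and $\cot\gamma=\phi-1$. The decisive simplification is the golden-ratio identity $\phi-\tfrac1\phi=1$, equivalently $\phi^{2}=\phi+1$, which is precisely what makes $\cot\beta-\cot\gamma=1$ compatible with $\beta+\gamma=\pi/2$. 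Substituting back gives $AC=\cot(\pi/4)+\cot\gamma=1+(\phi-1)=\phi$ and $AB=1+\phi=\phi^{2}$, so that $(AC,AB,BC)=\phi\,(1,\phi,\sqrt{1+\phi^{2}})$ and $\triangle ABC$ is similar to the half-golden-rectangle triangle, as claimed.

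The step I expect to be the main obstacle is not the stationarity computation itself but confirming that the golden critical point is the global minimum over the admissible boundary rather than a saddle or a maximum. I would settle this by examining the ends of the family — the area tends to $+\infty$ as $\beta\to0$, and it takes the larger value $1+\sqrt2+\tfrac12$ in the degenerate limit $\gamma\to\pi/2$ — and by noting that the critical point is unique, so the value $1+\tfrac{\sqrt5}{2}$ attained at the golden configuration is indeed the minimum. Along the way one should also check that the tangency feet $D,E$ and the diameter endpoints genuinely lie on the respective segments, which they do here since $OB,OC>1$.
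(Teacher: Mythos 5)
Your proposal starts from the same key observation as the paper --- tangency of the semicircle to $AB$ and $AC$ gives $S_{ABC}=S_{AOB}+S_{AOC}=\tfrac12(AB+AC)R$, so one minimizes $AB+AC$ --- and it lands on the correct configuration, but from there the two arguments diverge substantially. The paper never sets up a two-variable optimization: after ruling out $AB+AC\le R$ (via $\sin\theta=\tfrac{1}{x(1-x)}>1$), it substitutes the equality case of the non-triangle condition directly, $AC=R+AB=1+x$, equates the two area expressions to get $\sin\theta=\tfrac{2x+1}{x^2+x}$, and notes that $\sin\theta\le 1$ is equivalent to $x^2-x-1\ge 0$, i.e.\ $x\ge\phi$, while the area $\tfrac{2x+1}{2}$ is increasing in $x$; the minimum $\tfrac{\phi^3}{2}$ at $x=\phi$ then comes with $\sin\theta=1$, so the right angle falls out for free. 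Your route --- tangent-length parametrization by the base angles, the unconstrained critical point at $\beta=\gamma=\pi/4$ used to push the optimum onto the boundary $|AB-AC|=R$, then stationarity along that curve --- is conceptually sound and actually justifies something the paper glosses over, namely why one may restrict to the equality case $AC=R+AB$ rather than the full region $AC\ge R+AB$. What the paper's substitution buys in exchange is that your entire boundary analysis collapses into one quadratic inequality.

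Two places in your sketch still need work before it is a proof. First, the stationarity system on the curve $\cot\beta-\cot\gamma=1$ is only asserted to ``force'' $\theta=\pi/2$, $\cot\beta=\phi$, $\cot\gamma=\phi-1$; the identity $\phi^2=\phi+1$ confirms that this point is consistent with the equations, but you owe the derivation and the uniqueness claim on which your endpoint comparison relies. Second, your boundary check covers only the two ends of the one-parameter family, whereas the feasible set $\{|\cot\beta-\cot\gamma|\ge 1\}$ also has boundary along the edges $\beta=\pi/2$ or $\gamma=\pi/2$ of the configuration space (where a tangency foot and a diameter endpoint reach a vertex). On the edge $\gamma=\pi/2$ the function $AB+AC=2\tan(\pi/4+\beta/2)+\cot\beta$ has an interior minimum at $\beta=\pi/6$ with value $3\sqrt3$, giving area $\tfrac{3\sqrt3}{2}\approx 2.598$; this exceeds $\tfrac{\phi^3}{2}\approx 2.118$, so your conclusion survives, but the comparison is missing from your argument as written.
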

\begin{proof}
If $R$, $AB$ and $AC$ can not be three edges of a triangle, then first, we suppose the radius $R=1$ is the longest segment, thus, we have $AC=R-AB=1-x$, where $0<x<1$, then in $\triangle{ABC}$, we have the area equation (\ref{1})
\begin{equation}\label{1}
S_{ABC}=S_{AOB}+S_{AOC}
\end{equation}
and 
$$
S_{ABC}=\frac{1}{2}{AB}\cdot{AC}\sin{\theta}=\frac{1}{2}x(1-x)\sin{\theta}
$$
$$
S_{AOB}+S_{AOC}=\frac{1}{2}x+\frac{1}{2}(1-x)
$$
then we get 
\begin{equation}\label{1a}
\sin{\theta}=\frac{1}{x(1-x)}>1
\end{equation}
hence, we conclude that the radius $R$ can not be the longest segment among the three segments.

Next, we suppose $AC$ is the longest segment, then we have $AC=R+AB=1+x$,
using the similar method, we can get 
\begin{equation}\label{1b}
\sin{\theta}=\frac{2x+1}{x^2+x}
\end{equation}
since $0< \sin{\theta} \le 1$, we have 
\begin{equation}\label{1c}
0<\frac{2x+1}{x^2+x}\le 1
\end{equation}
and with $x>0$, we conclude that $x \ge \phi$, where $\phi=\frac{1 + \sqrt{5} }{2}$, hence 
\begin{equation}\label{1d}
S_{ABC}=\frac{2x+1}{2}\ge \frac{{\phi}^3}{2}
\end{equation}
The equality in (\ref{1d}) holds if and only if $x=\phi$. Then we get $AB=x=\phi$, $AC=1+x={\phi}^2$, and in (\ref{1b}), we have $\sin{\theta}= 1$, which means $\triangle{ABC}$ is a right triangle having sides proportional to ($1$, $\phi$,$\sqrt {1+\phi^2}$). 
\end{proof}
\section{A sequence of generalized golden right triangles}\label{sec:2}
\begin{figure}[ht]
\centering
\includegraphics[width=1\textwidth]{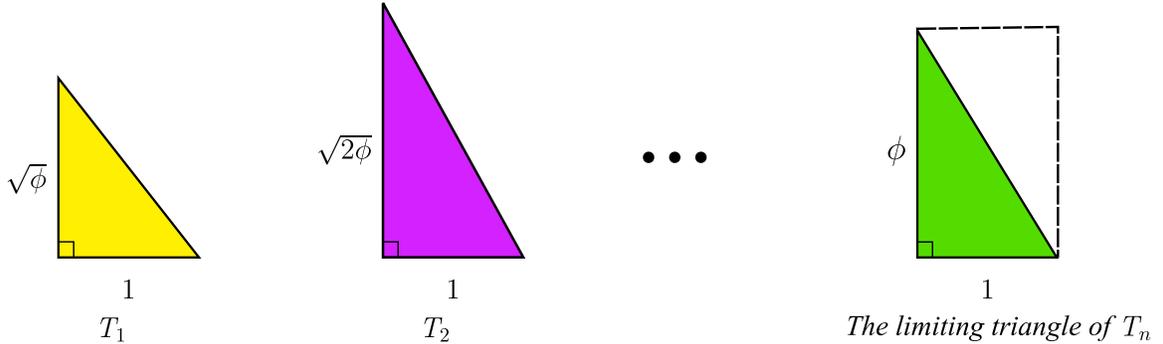}
\caption{A sequence of generalized golden right triangles $T_n$}
\label{fig:fg2}
\end{figure}
There is an identity (\ref{2}) of the golden ratio\cite{cte2} and Fibonacci numbers (see, e.g., \cite[p. 78]{cte5}).
\begin{equation}\label{2}
\phi^{n+1} = F_{n+1}\phi + F_{n}, \quad (n = 0, 1, 2, \ldots)
\end{equation}
If we rewrite (\ref{2}) in the form of (\ref{2a}),
\begin{equation}\label{2a}
1 + {\left( {\sqrt {\frac{\phi {F_{n+1} }}{{{F_{n}}}}} } \right)^2} = {\left( {\sqrt {\frac{{{\phi ^{n+1}}}}{{{F_{n}}}}} } \right)^2}, \quad (n = 1, 2, 3, \ldots)
\end{equation}
we will obtain a right triangles sequence $T_n$ with sides ($1$, $\sqrt {\frac{{{\phi F_{n+1}} }}{{{F_{n}}}}}$, $\sqrt {\frac{{{\phi ^{n+1}}}}{{{F_{n}}}}}$), see Figure \ref{fig:fg2}.
It's easy to see that, the first right triangle $T_1$ with sides ($1$, $\sqrt{\phi}$, $\phi$) is the Kepler triangle whose side lengths are in geometric progression, the second right triangle $T_2$ is a ($1$, $\sqrt{2\phi}$, $\phi\sqrt{\phi}$) triangle, and furthermore, let $n \to +\infty$, we find that the limiting right triangle of $T_n$ is just a ($1$, $\phi$, $\sqrt {1+\phi^2}$) triangle which forms half of a golden rectangle. 

Since there are only two golden (see \cite[p. 73]{cte8}) right triangles (one is the Kepler triangle with sides ($1$, $\sqrt{\phi}$, $\phi$), the other is the ($1$, $\phi$, $\sqrt {1+\phi^2}$) triangle), and they are both special cases of $T_n$, then, we can call $T_n$ a generalized golden right triangles sequence. In addition, we have the following simple area inequality (\ref{2b}) for $T_n$,
\begin{equation}\label{2b}
\triangle{T_1} \le \triangle{T_n} \le \triangle{T_2}
\end{equation}
where $\triangle{T_n}$ denoted as the area of $T_n$, and 
\begin{equation}\label{2c}
\triangle{T_n}=\frac{{\sqrt \phi  }}{2}\sqrt {\frac{{{F_{n + 1}}}}{{{F_n}}}}, \quad (n = 1, 2, 3, \ldots)
\end{equation}
in the inequality (\ref{2b}), we also notice that the second triangle $T_2$ with sides ($1$, $\sqrt{2\phi}$, $\phi\sqrt{\phi}$) is just the maximum area triangle in $T_n$, therefore, we can call $T_2$ the maximum generalized golden right triangle.

Interestingly enough, similar to the construction of the Kepler triangle $T_1$ by first creating a golden rectangle shown in \cite{cte3}, we can also construct the maximum generalized golden right triangle $T_2$ by first constructing a golden rectangle, see Figure \ref{fig:fg2a}.
\begin{figure}[ht]
\centering
\includegraphics[width=0.3\textwidth]{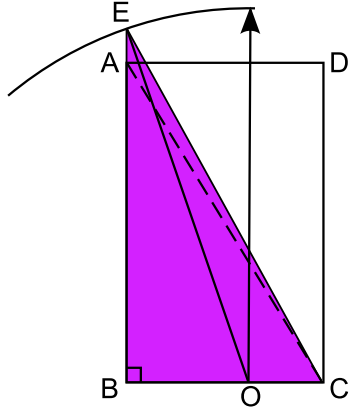}
\caption{An interesting construction of $T_2$}
\label{fig:fg2a}
\end{figure}
\begin{construction}\label{cons2}
A simple and interesting 3-step construction of $T_2$:
\begin{enumerate}[(1)]
\item construct a golden rectangle $ABCD$ with $BC = 1$, $AB = \phi$ (see, e.g., \cite[p. 118]{cte9})
\item construct $O$ dividing $BC$ in the golden ratio and $\frac{BO}{OC}=\phi$
\item draw an arc with the center at $O$ and the radius of length $AC$, cutting the extension of $BA$ at $E$, and join $E$ to $C$
\end{enumerate}
Then $\triangle{EBC}$ is just $T_2$ having sides $(1, \sqrt{2\phi}, \phi\sqrt{\phi})$.
\end{construction}
\begin{proof}
$BO=\frac{BC}{\phi}=\frac{1}{\phi}$, $EO=AC=\sqrt {1+\phi^2}$, thus, $BE = \sqrt{EO^2-BO^2}=\sqrt{2\phi}$.
\end{proof}
Last, back to Figure \ref{fig:fg1a} again, we give a problem to readers:
\begin{problem}\label{pmlst}
If the radius $R$ and the two sides $AB$ and $AC$ of $\triangle{ABC}$ can not be three edges of a triangle, determine $\triangle{ABC}$ of smallest perimeter. And if $\triangle{ABC}$ can not be an acute-angled triangle, determine $\triangle{ABC}$ of smallest perimeter.
\end{problem}

%%%%

%%%%

\medskip

\noindent Mathematics Subject Classification (2010).  51M04, 51M15, 11B39

\noindent Keywords.  Smallest Area, Triangle, Semicircle, Golden rectangle, Golden ratio, Fibonacci numbers, Kepler triangle, Golden right triangle, Maximum generalized golden right triangle
\end{document}